\DeclarePairedDelimiter \floor {\lfloor} {\rfloor}
\newcommand\numberthis{\addtocounter{equation}{1}\tag{\theequation}}
\newtheorem{theorem}{Theorem}[section]
\theoremstyle{definition}
\newtheorem*{theorem*}{Theorem}
\newtheorem*{conjecture*}{Conjecture}
\newtheorem*{proposition*}{Proposition}
\newtheorem*{lemma*}{Lemma}
\newtheorem{corollary}[theorem]{Corollary}
\theoremstyle{definition}
\newtheorem*{remark*}{Remark}
\begin{document}
\title{A VANISHING CRITERION FOR DIRICHLET SERIES WITH PERIODIC COEFFICIENTS}

\author[Tapas Chatterjee, M. Ram Murty and Siddhi Pathak]{Tapas Chatterjee\textsuperscript{1}, M. Ram Murty\textsuperscript{2} and Siddhi Pathak\textsuperscript{3}}
\email[Tapas Chatterjee]{tapas@iitrpr.ac.in}

\email[M. Ram Murty]{murty@mast.queensu.ca}

\email[Siddhi Pathak] {siddhi@mast.queensu.ca}

\subjclass[2010]{11M06, 11M20}

\keywords{Bass's theorem, Linear forms in logarithms}

\footnotetext[1]{Research of the first author was supported by ISIRD grant at the Indian Institute of Technology Ropar.}

\footnotetext[2]{Research of the second author was supported by an NSERC Discovery grant.}
\maketitle

\begin{abstract}
We address the question of non-vanishing of $L(1,f)$ where $f$ is an algebraic-valued, periodic arithmetical function. We do this by characterizing algebraic-valued, periodic functions $f$ for which $L(1,f)=0$. The case of odd functions was resolved by Baker, Birch and Wirsing in 1973. We apply a result of Bass to obtain a characterization for the even functions. We also describe a theorem of the first two authors which says that it is enough to consider only the even and the odd functions in order to obtain a complete characterization.
\end{abstract}

\section{\bf Introduction}
\bigskip

To unravel the mysteries surrounding Dirichlet's theorem about non-vanishing of $L(1,\chi)$ for a non-principal Dirichlet character $\chi$, Sarvadaman Chowla \cite{chowla} made the following conjecture in the early 1960s:
\begin{conjecture*}
Let $f$ be a rational-valued arithmetical function, periodic with prime period $p$. Further assume that $f(p) = 0$ and
\begin{equation*}
\sum_{a=1}^p f(a) = 0.
\end{equation*}
Then,
\begin{equation*}
\sum_{n=1}^{\infty} \frac{f(n)}{n} \neq 0,
\end{equation*}
unless $f$ is identically zero.
\end{conjecture*}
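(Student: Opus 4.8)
The plan is to prove the statement in the sharper form established by Baker, Birch and Wirsing: reduce the vanishing of $L(1,f):=\sum_{n\ge 1}f(n)/n$ to the vanishing of a \emph{homogeneous} linear form in logarithms of algebraic numbers, and then combine Baker's theorem on linear forms in logarithms with the classical theory of cyclotomic units. Convergence of the series is immediate from $\sum_{a=1}^{p}f(a)=0$, and with $\zeta:=e^{2\pi i/p}$ one has the identity $L(1,f)=-\sum_{n=1}^{p-1}\widehat f(n)\log(1-\zeta^{n})$, where $\widehat f(n)=\tfrac1p\sum_{a}f(a)\zeta^{-na}$. First I would split $f=f^{+}+f^{-}$ into its even and odd parts, $f^{\pm}(a)=\tfrac12\bigl(f(a)\pm f(p-a)\bigr)$; each part is rational-valued, periodic mod $p$, vanishing on the multiples of $p$, with vanishing sum over a period, so that $L(1,f)=L(1,f^{+})+L(1,f^{-})$. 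Separating the real and imaginary parts of $\log(1-\zeta^{n})=\log\!\bigl(2\sin(\pi n/p)\bigr)+i\bigl(\tfrac{\pi n}{p}-\tfrac{\pi}{2}\bigr)$, the odd imaginary part kills the contribution of $f^{+}$ and the even real part kills the contribution of $f^{-}$, giving
\[
L(1,f^{+})=-\frac{2}{p}\sum_{n=1}^{(p-1)/2}c_n\log\bigl|1-\zeta^{n}\bigr|,\qquad L(1,f^{-})=\frac{\pi}{p}\,\beta,
\]
where $c_n:=\sum_{a=1}^{p-1}f^{+}(a)\cos(2\pi na/p)\in\mathbb{Q}$ (the $\log 2$ terms cancel since $\sum_{n}c_n=0$) and $\beta:=\sum_{a=1}^{(p-1)/2}f^{-}(a)\cot(\pi a/p)$ is a real algebraic number.

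Now suppose $L(1,f)=0$, i.e.\ $\pi\beta=2\sum_{n}c_n\log|1-\zeta^{n}|$. Using $\pi=-i\log(-1)$, this is a vanishing homogeneous $\overline{\mathbb{Q}}$-linear form in logarithms of non-zero algebraic numbers,
\[
(-i\beta)\log(-1)+\sum_{n=1}^{(p-1)/2}(-2c_n)\log\bigl|1-\zeta^{n}\bigr|=0 .
\]
By Baker's theorem (logarithms of algebraic numbers that are $\mathbb{Q}$-linearly independent remain $\overline{\mathbb{Q}}$-linearly independent), either all the coefficients vanish, or the numbers $\log(-1)=i\pi$, $\log|1-\zeta^{1}|,\dots,\log|1-\zeta^{(p-1)/2}|$ are $\mathbb{Q}$-linearly dependent. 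Taking real and imaginary parts of a putative rational relation forces the coefficient of $i\pi$ to be $0$, reducing it to a non-trivial integer relation $\prod_{n=1}^{(p-1)/2}\bigl((1-\zeta^{n})(1-\zeta^{-n})\bigr)^{b_n}=1$ in $(K^{+})^{\times}$, where $K^{+}=\mathbb{Q}(\zeta+\zeta^{-1})$.

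Ruling this out by arithmetic is the crux and the main obstacle. Since $p$ is totally ramified in $K^{+}$, with prime $\mathfrak p$, one has $v_{\mathfrak p}\bigl((1-\zeta^{n})(1-\zeta^{-n})\bigr)=1$ for every $n$; applying $v_{\mathfrak p}$ gives $\sum_n b_n=0$, and the relation becomes $\prod_{n=2}^{(p-1)/2}\bigl(\sin^2(\pi n/p)/\sin^2(\pi/p)\bigr)^{b_n}=1$ among the cyclotomic units of $K^{+}$ — but these $(p-3)/2$ units are multiplicatively independent by the classical theory (they span a subgroup of finite index $h_p^{+}$ in the unit group of $K^{+}$), so all $b_n=0$, a contradiction. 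Hence every coefficient of the linear form vanishes: $\beta=0$ and $c_n=0$ for $1\le n\le(p-1)/2$. Together with $c_0=\sum_a f(a)=0$ and the fact that $\{\cos(2\pi na/p):0\le n\le(p-1)/2\}$ is a basis of the even functions on $\mathbb{Z}/p\mathbb{Z}$, this forces $f^{+}\equiv 0$. Finally $\beta=0$ is equivalent to $\sum_{a=1}^{p-1}f^{-}(a)/(\zeta^{a}-1)=0$; applying the Galois action of $(\mathbb{Z}/p\mathbb{Z})^{\times}$ and summing against a character $\chi$ mod $p$ yields $\widehat{f^{-}}(\bar\chi)\cdot\sum_{b=1}^{p-1}\chi(b)/(\zeta^{b}-1)=0$ for every $\chi$, and for odd $\chi$ the second factor is a non-zero multiple of $B_{1,\bar\chi}$, which is non-zero precisely because $L(1,\bar\chi)\ne0$ (Dirichlet). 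Hence $\widehat{f^{-}}(\bar\chi)=0$ for every odd $\chi$, so $f^{-}\equiv 0$, and therefore $f\equiv 0$.

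In short, the two genuinely hard ingredients are Baker's theorem on linear forms in logarithms and the multiplicative independence of the cyclotomic units of $\mathbb{Q}(\zeta_p)^{+}$ — the latter being exactly what guarantees that the real numbers $\log|1-\zeta^{n}|$ admit only trivial $\mathbb{Q}$-linear relations. The odd part needs only Dirichlet's classical non-vanishing $L(1,\chi)\ne0$, and the remaining steps (the digamma/Fourier identity for $L(1,f)$, the even/odd decomposition, and the finite Fourier analysis) are routine.
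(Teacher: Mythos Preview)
Your argument is essentially correct, with one small slip: the assertion $c_n\in\mathbb{Q}$ is false in general --- for a rational even $f^{+}$ the numbers $c_n=p\,\widehat{f^{+}}(n)$ lie only in the real cyclotomic field $\mathbb{Q}(\zeta_p)^{+}$ --- but this is harmless, since Baker's theorem allows algebraic coefficients and nothing in your subsequent reasoning uses rationality of the $c_n$.

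As for comparison: the paper does not supply its own proof of this conjecture; it simply records that the Baker--Birch--Wirsing theorem (stated without proof) implies it. Your argument is nonetheless very close in spirit to the machinery the paper builds in its later sections. Your even/odd split is exactly the paper's Theorem~1.1. Your treatment of the even part --- reducing via Baker to a multiplicative relation among the $(1-\zeta^n)(1-\zeta^{-n})$ and invoking the multiplicative independence of the real cyclotomic units --- is precisely the prime-modulus case of the paper's Theorem~4.1, which for composite $q$ appeals to Bass's theorem; for prime $p$ the Bass relations are vacuous, which is the content of the paper's Corollary. Where you genuinely diverge is the odd part: the paper derives only the cotangent criterion and defers to Baker--Birch--Wirsing for the vanishing of $f^{-}$, whereas you push the single relation $\sum_a f^{-}(a)/(\zeta^a-1)=0$ through $\mathrm{Gal}(\mathbb{Q}(\zeta_p)/\mathbb{Q})$, decouple via characters, and reduce to $\sum_b \chi(b)/(\zeta^b-1)\neq 0$ for odd $\chi$, i.e.\ to Dirichlet's $L(1,\chi)\neq 0$. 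This is clean and self-contained, though one might note that Chowla's motivation was precisely to \emph{explain} Dirichlet's non-vanishing, so invoking it here --- while logically impeccable --- runs somewhat against the spirit of the question; the original Baker--Birch--Wirsing argument avoids this circularity of motivation.
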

Chowla proved this conjecture in the case of odd functions i.e, $f(p-n) = -f(n)$ based on an outline of the proof by Siegel \cite{chowla}. The complete resolution of Chowla's question in a wider setting was given by Baker, Birch and Wirsing in 1973 \cite{bbw}. They proved the following general theorem:
\begin{theorem*}
If $f$ is a non-vanishing function defined on the integers with algebraic values and period $q$ such that (i) $f(n) = 0$ whenever $1 < (n,q) < q$ and (ii) the $q^{th}$ cyclotomic polynomial $\Phi_q$ is irreducible over $\mathbb{Q} (f(1), f(2), \cdots , f(q))$, then
\begin{equation*}
\sum_{n=1}^\infty \frac{f(n)} {n} \neq 0.
\end{equation*} 
\end{theorem*}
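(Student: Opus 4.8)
The plan is to turn the hypothesis $L(1,f)=0$ into a vanishing $\overline{\mathbb{Q}}$-linear form in logarithms of algebraic numbers, to apply Baker's theorem on linear forms in logarithms to locate the coefficient vector inside the lattice of multiplicative relations among cyclotomic numbers, and then to use condition (ii) through a Galois argument to force $f\equiv 0$. First I would note that convergence of $\sum_n f(n)/n$ forces $\sum_{a=1}^{q} f(a)=0$, so that by (i) the function $f$ is, apart from the (now harmless) residue class $0\bmod q$, supported on residues prime to $q$. Writing the finite Fourier expansion $f(n)=\sum_{a=0}^{q-1}\hat f(a)\,\zeta_q^{an}$ with $\zeta_q=e^{2\pi i/q}$ and $\hat f(a)=q^{-1}\sum_{m=1}^{q}f(m)\,\zeta_q^{-am}$, one has $\hat f(a)\in\mathbb{Q}(f(1),\dots,f(q),\zeta_q)\subset\overline{\mathbb{Q}}$ and $\hat f(0)=0$. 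Summing the identity $\sum_{n\ge1}z^n/n=-\operatorname{Log}(1-z)$ over $|z|\le 1,\ z\neq 1$ (justified at the boundary by Abel summation) gives the closed form
$$L(1,f)=-\sum_{a=1}^{q-1}\hat f(a)\,\operatorname{Log}\bigl(1-\zeta_q^{a}\bigr),$$
and since $\operatorname{Log}(1-\zeta_q^{a})=\tfrac12\operatorname{Log}\bigl((1-\zeta_q^{a})(1-\zeta_q^{-a})\bigr)+\tfrac{2a-q}{2q}\operatorname{Log}(-1)$, the right-hand side is an algebraic-linear combination of logarithms of nonzero algebraic numbers.

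Assume now $L(1,f)=0$. By Baker's theorem (if nonzero algebraic $\alpha_1,\dots,\alpha_m$ have $\mathbb{Q}$-independent logarithms, then $1,\operatorname{Log}\alpha_1,\dots,\operatorname{Log}\alpha_m$ are $\overline{\mathbb{Q}}$-independent), the vector of coefficients — the $\hat f(a)$ together with the coefficient of $\operatorname{Log}(-1)$ — must lie in the $\overline{\mathbb{Q}}$-span of the lattice of additive relations among these logarithms, equivalently of the exponent vectors of multiplicative relations $\prod_a(1-\zeta_q^{a})^{m_a}\in\mu_\infty$. Comparing the real parts (the $\log|1-\zeta_q^{a}|$) with the purely imaginary contribution of $\operatorname{Log}(-1)=i\pi$ kills the $\operatorname{Log}(-1)$-coefficient and shows that $(\hat f(a))_{1\le a\le q-1}$ lies in the $\overline{\mathbb{Q}}$-vector space spanned by the lattice $M\subset\mathbb{Z}^{q-1}$ of all multiplicative relations among the real cyclotomic numbers $|1-\zeta_q^{a}|$.

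The crux is to describe $M$: its relations are generated by the complex-conjugation relations $|1-\zeta_q^{a}|=|1-\zeta_q^{-a}|$, the distribution relations coming from $X^d-1\mid X^q-1$, and the norm relation $\prod_{(a,q)=1}(1-\zeta_q^{a})=\Phi_q(1)$ — this is the content of Bass's theorem on cyclotomic units. Transporting this description back through the Fourier transform confines $f$ to a small, explicit family (odd functions, together with Ramanujan-sum type functions when $q$ is not a prime power). Finally, condition (ii) says that $\Phi_q$ is irreducible over $K:=\mathbb{Q}(f(1),\dots,f(q))$, so $\operatorname{Gal}(K(\zeta_q)/K)\cong(\mathbb{Z}/q\mathbb{Z})^{\times}$; since every $\sigma_t$ fixes the values $f(m)\in K$, it acts on Fourier coefficients by $\sigma_t\colon\hat f(a)\mapsto\hat f(ta)$. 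This Galois-equivariance, imposed on a vector of the $\overline{\mathbb{Q}}$-span of $M$ with $\hat f(0)=0$ and $\sum_a f(a)=0$, has only the trivial solution $\hat f\equiv 0$, i.e.\ $f\equiv 0$, contradicting that $f$ is non-vanishing; hence $L(1,f)\neq 0$.

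The main obstacle is precisely this penultimate stage: identifying the full lattice $M$ of multiplicative relations among the $1-\zeta_q^{a}$ for composite $q$ — where distribution relations genuinely enter — and then verifying that the Galois-equivariance forced by (ii) is incompatible with membership in the $\overline{\mathbb{Q}}$-span of $M$ unless $f$ vanishes. For odd $f$ the latter verification is the classical Chowla–Siegel argument as extended by Baker, Birch and Wirsing; the even part is where Bass's theorem carries the weight.
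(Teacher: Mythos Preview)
The paper does not prove this theorem: it is the Baker--Birch--Wirsing theorem, stated in the introduction as a known result from \cite{bbw} and used as motivation. There is no proof in the paper to compare against.

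As for your proposal on its own merits, there is a genuine gap. You correctly reach the point where $L(1,f)=0$ forces the vector $(\hat f(a))_a$ into the $\overline{\mathbb{Q}}$-span of the lattice of multiplicative relations among the $|1-\zeta_q^a|$, and you correctly observe that condition (ii) gives a transitive $(\mathbb{Z}/q\mathbb{Z})^{\times}$-action on the Fourier side. But the sentence ``This Galois-equivariance \dots\ has only the trivial solution $\hat f\equiv 0$'' is precisely the content of the theorem, and you assert it without argument. It is not automatic: for composite $q$ the Bass relations $\mathfrak{R}_2$ are genuinely nonzero and are themselves permuted by the Galois action, so equivariance alone does not exclude them. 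What must do the work here is condition (i), which you mention in passing but never actually use after the first paragraph. Condition (i) says $f$ is supported on residues with $(n,q)\in\{1,q\}$; translated to the Fourier side this imposes strong constraints that, together with the Galois symmetry, are what rule out the distribution-relation contributions. Without making that interaction explicit, the argument is incomplete.

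It is also worth noting that the original Baker--Birch--Wirsing proof does not pass through Bass's theorem at all. Their argument uses conditions (i) and (ii) to average over $\mathrm{Gal}(\mathbb{Q}(\zeta_q)/\mathbb{Q})$ directly and then applies Baker's theorem to logarithms of a multiplicatively independent set, bypassing any need to classify the full relation lattice. Your route via Bass is closer in spirit to what \emph{this} paper does in Section~4 for even functions---but there the goal is a structure theorem for the vanishing locus, not the BBW non-vanishing statement, and conditions (i) and (ii) are not in play.
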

Let us observe that in the case of Chowla's conjecture, condition (i) is vacuous as $q$ is prime and $f(q)=0$. The condition (ii) is also satisfied as $f$ is rational-valued and the $q$-th cyclotomic polynomial is irreducible over $\mathbb{Q}$ . Thus, the Baker-Birch-Wirsing theorem implies Chowla's conjecture.

Chowla's question can be asked in the most general setting as follows: Fix a positive integer $q$. Let $f$ be an algebraic-valued arithmetical function, periodic with period $q$. It is useful to define an $L$-function associated to the function $f$, namely,
\begin{equation*}
L(s,f) = \sum_{n=1}^{\infty} \frac{f(n)}{n^s}.
\end{equation*}
Using the analytic continuation of the Hurwitz zeta function \cite{hurwitz}, we can deduce that $L(1,f)$ exists if and only if $\sum_{a=1}^q f(a) = 0$. Thus, we can ask the following question: if $f$ is not identically zero, then is it true that
\begin{equation*}
\sum_{n=1}^{\infty} \frac{f(n)}{n} \neq 0 \hspace{1mm}?
\end{equation*}
The answer in this case turns out to be negative \cite{bbw}. As an example, consider the function $f$ defined such that
\begin{equation}\label{function-eg}
\sum_{n=1}^{\infty} \frac{f(n)}{n^s} = {(1 - p^{(1-s)})}^2 \zeta(s).
\end{equation}
In particular,
\begin{equation*}
f(n) = 
\begin{cases}
1 & \text{if } (n,p)=1, \\
1 - 2p & \text{if } p | n, p^2 \nmid n, \\
{(p-1)}^2 & \text{otherwise}.
\end{cases}
\end{equation*}
Note that the function $f$ is periodic with period $p^2$. Taking limit of the right hand side of \eqref{function-eg} as $s \rightarrow 1$, we get
\begin{equation*}
L(1,f) = 0,
\end{equation*}
because $\zeta(s)$ has a simple pole at $s=1$.

In their paper, \cite{bbw}, Baker, Birch and Wirsing also give a characterization of all odd algebraic-valued periodic arithmetical functions $f$ that satisfy $L(1,f) = 0$. Since their argument is short and elegant, we describe it in the third section. Their approach suggests a change in perspective. Instead of trying to prove the non-vanishing of an expression, we will try to characterize the functions $f$ for which $L(1,f)=0$.

Let $f$ be any function. Then it can be written as the sum of an even function and an odd function as follows. Define $f_o(a) := [f(a) - f(-a)]/2$  and $f_e(a) := [f(a) + f(-a)]/2$. Then clearly $f = f_o + f_e$. In 2014, Ram Murty and Tapas Chatterjee \cite{ram-tapas} made an important observation. They proved,
\begin{theorem}\label{even-odd}
For a periodic, algebraic-valued arithmetical function $f$,
\begin{equation*}
L(1,f) = 0 \iff L(1,f_o) = 0 \hspace{2mm} \text{and} \hspace{2mm} L(1,f_e) = 0.
\end{equation*}
\end{theorem}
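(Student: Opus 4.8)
The plan is to isolate the arithmetic nature of $L(1,f_o)$ and of $L(1,f_e)$ separately and then appeal to Baker's theorem on linear forms in logarithms. The implication ``$\Leftarrow$'' is immediate, since $g\mapsto L(1,g)$ is linear on the space of periodic functions with $\sum_{a=1}^q g(a)=0$ and $f=f_o+f_e$; note that $\sum_a f(a)=0$ automatically gives $\sum_a f_o(a)=0$ (because $f_o$ is odd) and hence $\sum_a f_e(a)=0$, so all three values are defined. For the converse I would begin from the classical closed form for $L(1,g)$: writing the finite Fourier expansion $g(n)=\sum_{a=1}^{q-1}\widehat g(a)\,\zeta_q^{\,an}$ with $\zeta_q=e^{2\pi i/q}$ (the $a=0$ coefficient is $q^{-1}\sum_a g(a)=0$), Abel summation yields
\begin{equation*}
L(1,g) \;=\; -\sum_{a=1}^{q-1}\widehat g(a)\,\log\!\bigl(1-\zeta_q^{\,a}\bigr).
\end{equation*}

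Next I would split each logarithm into modulus and argument: for $1\le a\le q-1$,
\begin{equation*}
\log\!\bigl(1-\zeta_q^{\,a}\bigr) \;=\; \log\!\Bigl(2\sin\tfrac{\pi a}{q}\Bigr) + i\pi\Bigl(\tfrac{a}{q}-\tfrac12\Bigr).
\end{equation*}
Oddness of $f_o$ means $\widehat{f_o}(q-a)=-\widehat{f_o}(a)$ and evenness of $f_e$ means $\widehat{f_e}(q-a)=\widehat{f_e}(a)$, so pairing the index $a$ with $q-a$ kills the modulus terms in the first case and the argument terms in the second. This gives
\begin{equation*}
L(1,f_o) \;=\; i\pi\,\beta, \qquad L(1,f_e) \;=\; \sum_{a}\gamma_a\,\log\lambda_a ,
\end{equation*}
where $\beta$ and the $\gamma_a$ are algebraic numbers --- explicit $\overline{\mathbb Q}$-linear combinations of $f(1),\dots,f(q)$ --- and $\lambda_a=\bigl|1-\zeta_q^{\,a}\bigr|^2=2-2\cos\tfrac{2\pi a}{q}$ are positive real algebraic numbers (one simply drops the indices with $\lambda_a=1$). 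The identity $L(1,f_o)=i\pi\,\beta$ is exactly the classical fact, already lurking behind the Baker--Birch--Wirsing treatment of the odd case, that $L(1,\cdot)$ of an odd periodic function lies in $\pi\overline{\mathbb Q}$.

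Now assume $L(1,f)=0$. Using $i\pi=\log(-1)$, the relation $L(1,f_o)+L(1,f_e)=0$ becomes
\begin{equation*}
\beta\,\log(-1) + \sum_{a}\gamma_a\,\log\lambda_a \;=\; 0 ,
\end{equation*}
a $\overline{\mathbb Q}$-linear relation among logarithms of nonzero algebraic numbers. By Baker's theorem, $\mathbb Q$-linear independence of such a family of logarithms forces their $\overline{\mathbb Q}$-linear independence; so, after replacing $\{\log\lambda_a\}$ by a maximal $\mathbb Q$-independent subfamily $\log\lambda_{b_1},\dots,\log\lambda_{b_k}$ and substituting, one is left with a $\overline{\mathbb Q}$-linear relation among $\log(-1),\log\lambda_{b_1},\dots,\log\lambda_{b_k}$. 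These are $\mathbb Q$-linearly independent --- a rational relation would, on comparing imaginary parts, have zero coefficient on $i\pi$, and then zero coefficients on the real numbers $\log\lambda_{b_j}$ by choice of the subfamily --- hence $\overline{\mathbb Q}$-linearly independent by Baker. Therefore every coefficient vanishes; in particular $\beta=0$, so $L(1,f_o)=0$ and then $L(1,f_e)=L(1,f)-L(1,f_o)=0$.

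The Fourier/Abel-summation formula and the trigonometric bookkeeping in the pairing step are routine; the entire weight of the argument rests on Baker's theorem (equivalently, on the linear independence over $\overline{\mathbb Q}$ of $i\pi$ together with a $\mathbb Q$-independent family of real logarithms of algebraic numbers). I expect the only point that needs genuine care is stating the version of Baker's theorem precisely and carrying out the reduction to a $\mathbb Q$-independent subfamily so that it applies cleanly.
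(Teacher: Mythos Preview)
Your proposal is correct and follows essentially the same route as the paper: both arguments compute $L(1,f_o)=i\pi\beta$ with $\beta\in\overline{\mathbb Q}$ and $L(1,f_e)$ as a $\overline{\mathbb Q}$-linear form in logarithms of positive real algebraic numbers (namely $2\sin(\pi a/q)=|1-\zeta_q^a|$), and then conclude via Baker's theorem that the ``$\pi$-part'' and the ``real-log part'' must vanish separately. The only cosmetic difference is that the paper invokes a prepackaged corollary (that $c_0\pi+\sum c_j\log\alpha_j\neq 0$ for positive algebraic $\alpha_j$ and algebraic $c_j$ with $c_0\neq 0$), whereas you carry out the reduction to a $\mathbb Q$-independent subfamily and the application of Baker explicitly.
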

In the view of this theorem, to understand the vanishing of $L(1,f)$ in our case, it is enough to consider even algebraic-valued periodic arithmetical functions. In the fourth section, we apply a beautiful result of Bass \cite{bass} to obtain a set of functions that act as building blocks for even algebraic-valued periodic arithmetical functions $f$ with $L(1,f) = 0$. This completes to an extent the characterization we were after.

\section{\bf Preliminaries}
\bigskip

The aim of this section is to introduce notation and some fundamental results that will be used in the later part of the paper.

Let $q$ be a fixed positive integer. Consider $f : \mathbb{Z} \to \bar{\mathbb{Q}}$, periodic with period $q$. Define
\begin{equation*}
L(s,f) = \sum_{n=1}^{\infty} \frac{f(n)}{n^s}.
\end{equation*}
Let us observe that $L(s,f)$ converges absolutely for $\Re(s) > 1$. Since $f$ is periodic,
\begin{equation*}\label{hurw-form}
\begin{split}
L(s,f) & = \sum_{a=1}^q f(a)\sum_{k=0}^{\infty} \frac{1}{{(a + kq)}^s}\\
& = \frac{1}{q^s} \sum_{a=1}^q f(a) \zeta(s, a/q),
\end{split}
\end{equation*}
where $\zeta(s,x)$ is the Hurwitz zeta function. For $\Re(s) > 1$ and $0 < x \leq 1$, the Hurwitz zeta function is defined as
\begin{equation*}
\zeta(s,x) = \sum_{n=0}^{\infty} \frac{1}{{(n+x)}^s}.
\end{equation*}
In 1882, Hurwitz \cite{hurwitz} proved that $\zeta(s,x)$ has an analytic continuation to the entire complex plane except for a simple zero at $s=1$ with residue $1$. This can be used to conclude that $L(s,f)$ can be extended analytically to the entire complex plane except for a simple pole at $ s=1 $ with residue $ \frac {1} {q} \sum_{a=1}^q f(a)$. Thus, $\sum_{n=1}^\infty \frac {f(n)} {n}$ exists whenever  $\sum_{a=1}^q f(a) = 0$, which we will assume henceforth. Thus, $L(s,f)$ is an entire function.

Given a function $f$ which is periodic mod $q$, we define the Fourier transform of $f$ as
\begin{equation*}
\hat{f}(x) := \frac{1}{q} \sum_{a=1}^q f(a) \zeta_q^{-ax}, 
\end{equation*}
where $\zeta_q = e^{2 \pi i/q}$.
This can be inverted using the identity
\begin{equation}\label{fourier-inversion}
f(n) = \sum_{x=1}^q \hat{f}(x) \zeta_q^{xn}.
\end{equation}
Thus, the condition for convergence of $L(1,f)$, i.e, $\sum_{a=1}^q f(a) = 0$ can be interpreted as $\hat{f}(q) = 0$. Substituting \eqref{fourier-inversion} in the expression for $L(s,f)$ we have,
\begin{equation*}
\begin{split}
L(s,f) & = \sum_{n=1}^{\infty} \frac{1}{n^s} \sum_{x=1}^q \hat{f}(x) \zeta_q^{xn}.\\
 & = \sum_{x=1}^{q} \hat{f}(x) \sum_{n=1}^{\infty} \frac{\zeta_q^{xn}}{n^s}.
\end{split}
\end{equation*}
Assuming that $\hat{f}(q) = 0$, specializing at $s = 1$ and using the Taylor series expansion of the log we conclude that
\begin{equation}\label{linear-form-log}
L(1,f) = - \sum_{x=1}^{q-1} \hat{f}(x) \log(1 - \zeta_q^x),
\end{equation}
where $\log$ is the principal branch.

We will also apply the famous theorem of Baker \cite{baker} concerning linear forms in logarithms and so we note it here.
\begin{theorem}\label{baker-llog}
If $\alpha_1, \alpha_2, \cdots, \alpha_n$ are non-zero algebraic numbers, such that $\log \alpha_1, \log \alpha_2, \cdots , \log \alpha_n$ are $\mathbb{Q}$-linearly independent, then $1 , \log \alpha_1, \cdots, \log \alpha_n$ are $\bar{\mathbb{Q}}$-linearly independent.
\end{theorem}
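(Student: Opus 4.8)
The statement to be established is Baker's theorem on linear forms in logarithms, which the paper invokes as a known deep result rather than reproving; below I sketch the classical Gelfond--Baker transcendence argument that proves it. The plan is to argue by contradiction. Suppose there are algebraic numbers $\beta_0,\beta_1,\dots,\beta_n$, not all zero, with $\beta_0+\beta_1\log\alpha_1+\cdots+\beta_n\log\alpha_n=0$. Since the $\log\alpha_j$ are $\mathbb{Q}$-linearly independent (hence nonzero), the purely homogeneous case (some $\beta_j\neq 0$, $j\geq 1$, and $\beta_0=0$) is itself part of the theorem and is handled by the same machinery; otherwise $\beta_0\neq0$, and after relabeling so that $\beta_n\neq0$ we may rewrite the relation as $\log\alpha_n=\gamma_0+\gamma_1\log\alpha_1+\cdots+\gamma_{n-1}\log\alpha_{n-1}$ with the $\gamma_j$ algebraic. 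Fix a number field $K$ containing all the $\alpha_j$ and $\gamma_j$.

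Step 1 (auxiliary function). Choose large integer parameters and, by Siegel's lemma (a pigeonhole argument over the ring of integers of $K$), produce algebraic integers $p(\lambda_0,\dots,\lambda_{n-1})$, not all zero and of controlled height, so that the entire function
\[
\Phi(z)=\sum_{\lambda_0=0}^{L}\cdots\sum_{\lambda_{n-1}=0}^{L} p(\lambda_0,\dots,\lambda_{n-1})\, z^{\lambda_0}\prod_{j=1}^{n-1}\alpha_j^{(\lambda_j+\gamma_j\lambda_0)z}\cdot\alpha_n^{\gamma_0\lambda_0 z}
\]
satisfies $\Phi^{(t)}(\ell)=0$ for all $0\le t<h$ and $1\le \ell\le L_0$, the underlying homogeneous linear system having more unknowns than equations. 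The relation $\log\alpha_n=\gamma_0+\sum_{j<n}\gamma_j\log\alpha_j$ is used precisely so that $\alpha_n^{\lambda_0 z}=e^{\lambda_0 z\log\alpha_n}$ can be absorbed into the other exponentials, making $\Phi$ a genuine exponential polynomial whose derivatives at rational integers lie in $K$ with bounded height and bounded size.

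Step 2 (extrapolation). Because $\Phi$ vanishes to high order at many integers, the maximum-modulus principle applied together with an interpolation formula forces $|\Phi^{(t)}(\ell)|$ to be extremely small on a larger range of $(\ell,t)$. But $\Phi^{(t)}(\ell)$ is an algebraic number in $K$ with controlled house and denominator, so Liouville's inequality (the fundamental inequality) implies that if it is smaller than an explicit bound it must vanish. Hence $\Phi$ and its derivatives vanish on a strictly larger range; iterating this bootstrap a bounded number of times accumulates an enormous number of zeros of $\Phi$. In the final step one derives a contradiction: a nonzero exponential polynomial cannot have that many zeros (counted with multiplicity), which one sees either via a zero estimate or by extracting from a non-vanishing generalized Vandermonde/Wronskian determinant an algebraic number smaller than its own Liouville bound. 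The $\mathbb{Q}$-linear independence of $\log\alpha_1,\dots,\log\alpha_n$ enters exactly here, guaranteeing that the exponents appearing in $\Phi$ are pairwise distinct so that the relevant determinant is nonzero. This contradiction rules out any nontrivial relation $\beta_0+\sum_j\beta_j\log\alpha_j=0$.

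The main obstacle is Step 2 together with the parameter bookkeeping throughout: one must choose $L$, $h$, and the sizes of the ranges at each extrapolation step so that the analytic smallness estimate always beats the arithmetic Liouville bound while the Siegel-lemma construction in Step 1 stays solvable, and the concluding zero estimate must be quantitative enough to close the loop. Making all of these inequalities compatible simultaneously is the technical heart of Baker's method and the reason this is a deep theorem; accordingly, in the paper itself one simply cites \cite{baker} and uses the statement as a black box in the form of Theorem~\ref{baker-llog}.
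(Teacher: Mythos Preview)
Your reading is correct: the paper gives no proof of Theorem~\ref{baker-llog} at all; it simply records the statement and cites Baker's monograph~\cite{baker}, then uses it (via Corollaries~\ref{baker-cor} and~\ref{transc-pi}) as a black box. So there is no argument in the paper to compare your proposal against, and you explicitly acknowledge this at the end of your sketch.

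What you supply beyond the paper is a high-level outline of the Gelfond--Baker method (auxiliary function built via Siegel's lemma, extrapolation by maximum modulus plus Liouville, and a final zero/Vandermonde estimate exploiting the $\mathbb{Q}$-linear independence of the $\log\alpha_j$). As an outline this is faithful to the standard proof. One minor quibble: the precise shape of the auxiliary function you wrote is not quite the one Baker uses---in the classical one-variable version the function is of the form $\sum p(\lambda_0,\dots,\lambda_n)\, z^{\lambda_0}\alpha_1^{\lambda_1 z}\cdots\alpha_n^{\lambda_n z}$, and the assumed relation is used to see that its derivatives at integers lie in a fixed number field; your variant with $\alpha_n^{\gamma_0\lambda_0 z}$ mixes in the inhomogeneous term in a nonstandard way. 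This does not affect the correctness of the overall strategy you describe, and you are right that the delicate parameter balancing in Step~2 is the genuine content, which is why the paper simply cites~\cite{baker}.
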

A useful corollary of this statement is the following.
\begin{corollary}\label{baker-cor}
If $\alpha_1, \alpha_2, \cdots , \alpha_n$ are algebraic numbers different from $0$ and $1$, and $\beta_1,\beta_2, \cdots , \beta_n$ are $\mathbb{Q}$-linearly independent algebraic numbers then,
\begin{equation*}
\beta_1 \log{\alpha_1} + \beta_2 \log{\alpha_2} + \cdots + \beta_n \log{\alpha_n}\\
\end{equation*}
is non-zero and hence, transcendental.
\end{corollary}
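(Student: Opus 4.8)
The plan is to reduce the statement to a single application of Theorem \ref{baker-llog}; the one point needing care is that the logarithms $\log\alpha_1,\dots,\log\alpha_n$ need not themselves be $\mathbb{Q}$-linearly independent. First I would extract a maximal $\mathbb{Q}$-linearly independent subset $\log\alpha_{i_1},\dots,\log\alpha_{i_r}$ of $\{\log\alpha_1,\dots,\log\alpha_n\}$; since each $\alpha_j$ is different from $0$ and $1$, every $\log\alpha_j$ is a well-defined nonzero complex number, so $r\geq 1$. Expressing each $\log\alpha_j=\sum_{k=1}^r c_{jk}\log\alpha_{i_k}$ with $c_{jk}\in\mathbb{Q}$ and substituting, the linear form in question becomes
\begin{equation*}
\Lambda:=\sum_{j=1}^n\beta_j\log\alpha_j=\sum_{k=1}^r\gamma_k\log\alpha_{i_k},\qquad \gamma_k:=\sum_{j=1}^n c_{jk}\beta_j\in\bar{\mathbb{Q}}.
\end{equation*}

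The crucial step, which I expect to be the only real obstacle, is to check that the new coefficients $\gamma_1,\dots,\gamma_r$ do not all vanish; this is exactly where one uses that $\beta_1,\dots,\beta_n$ are $\mathbb{Q}$-linearly \emph{independent}, not merely nonzero. The key observation is that the rational matrix $(c_{jk})$ contains an $r\times r$ identity block: taking $j=i_l$ in the relation $\log\alpha_j=\sum_k c_{jk}\log\alpha_{i_k}$ and invoking the $\mathbb{Q}$-linear independence of $\log\alpha_{i_1},\dots,\log\alpha_{i_r}$ forces $c_{i_l,k}=\delta_{lk}$. Consequently $\gamma_k=\beta_{i_k}+\sum_{j\notin\{i_1,\dots,i_r\}}c_{jk}\beta_j$, so $\gamma_k=0$ would produce a nontrivial $\mathbb{Q}$-linear relation among $\beta_1,\dots,\beta_n$ (the coefficient of $\beta_{i_k}$ being $1$), contradicting the hypothesis; hence $\gamma_k\neq 0$ for every $k$.

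Finally I would apply Theorem \ref{baker-llog} to the nonzero algebraic numbers $\alpha_{i_1},\dots,\alpha_{i_r}$, whose logarithms are $\mathbb{Q}$-linearly independent by construction, obtaining that $1,\log\alpha_{i_1},\dots,\log\alpha_{i_r}$ are $\bar{\mathbb{Q}}$-linearly independent. If $\Lambda$ were equal to some algebraic number $\mu$ (including the possibility $\mu=0$), then $\mu\cdot 1-\sum_{k=1}^r\gamma_k\log\alpha_{i_k}=0$ would be a nontrivial $\bar{\mathbb{Q}}$-linear dependence among $1,\log\alpha_{i_1},\dots,\log\alpha_{i_r}$, which is impossible. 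Therefore $\Lambda$ is not algebraic, so in particular it is nonzero and transcendental, as claimed.
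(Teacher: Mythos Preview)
The paper does not give a proof of this corollary; it is simply stated as ``a useful corollary'' of Theorem~\ref{baker-llog} and left at that. Your argument is correct and supplies exactly the standard details one would expect for this deduction: pass to a maximal $\mathbb{Q}$-linearly independent subset of the logarithms, use the $\mathbb{Q}$-linear independence of the $\beta_j$ together with the identity block in $(c_{jk})$ to rule out the vanishing of the rewritten coefficients, and then invoke Theorem~\ref{baker-llog}. (In fact you prove slightly more than is needed---that every $\gamma_k$ is nonzero, whereas a single nonzero $\gamma_k$ already suffices to make the final dependence nontrivial---but this does no harm.)
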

Another helpful corollary of Baker's theorem is the following statement about linear forms in logarithms of positive algebraic numbers. For a proof of the corollary, we refer the reader to \cite{ram-saradha}.
\begin{corollary}\label{transc-pi}
Let $\alpha_1, \cdots, \alpha_n$ be positive algebraic numbers. If $c_0, \cdots, c_n$ are algebraic numbers and $c_0 \neq 0$, then
\begin{equation*}
c_0 \pi + \sum_{j=1}^n c_j \log \alpha_j
\end{equation*}
is non-zero and hence, a transcendental number.
\end{corollary}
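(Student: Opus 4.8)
The plan is to recognize $\pi$ itself as (a scalar multiple of) the logarithm of an algebraic number, and then invoke Baker's theorem (Theorem \ref{baker-llog}). For the principal branch one has $\log(-1) = i\pi$, hence $\pi = -i\log(-1)$, so that
\[
c_0\pi + \sum_{j=1}^n c_j\log\alpha_j \;=\; -ic_0\log(-1) + \sum_{j=1}^n c_j\log\alpha_j ,
\]
which is a $\bar{\mathbb{Q}}$-linear combination of the logarithms of the nonzero algebraic numbers $-1, \alpha_1, \dots, \alpha_n$, with the coefficient $-ic_0$ of $\log(-1)$ nonzero.

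Next I would pass to a $\mathbb{Q}$-linearly independent family so that Theorem \ref{baker-llog} can be applied. After discarding any $\alpha_j$ equal to $1$ (whose logarithm is $0$), I would choose a $\mathbb{Q}$-basis $B$ of the span of $\log(-1), \log\alpha_1, \dots, \log\alpha_n$ that contains $\log(-1)$ together with a subcollection of the $\log\alpha_j$; this is possible by the Steinitz exchange lemma since $\log(-1)\neq 0$. Theorem \ref{baker-llog} then yields that $\{1\}\cup B$ is linearly independent over $\bar{\mathbb{Q}}$. The single place where the hypothesis that the $\alpha_j$ are \emph{positive} is used is the following: when each $\log\alpha_j$ is expanded in the basis $B$, the coefficient of $\log(-1)$ vanishes, because $\log\alpha_j\in\mathbb{R}$ while $\log(-1)=i\pi$ is purely imaginary and the remaining basis vectors are real, so comparing imaginary parts forces that coefficient to be $0$.

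Finally, suppose for contradiction that the form equals some algebraic number $\gamma$. Substituting the basis expansions of the $\log\alpha_j$ turns the identity into a $\bar{\mathbb{Q}}$-linear relation among the elements of $\{1\}\cup B$ equal to zero, and by the previous paragraph the $\log\alpha_j$ contribute nothing to the $\log(-1)$-coefficient, so that coefficient is exactly $-ic_0$. Linear independence of $\{1\}\cup B$ forces $-ic_0=0$, i.e. $c_0=0$, contradicting the hypothesis $c_0\neq 0$. Hence the form is not an algebraic number; in particular it is nonzero and transcendental.

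The main (really the only) subtle point is the reduction to a $\mathbb{Q}$-linearly independent family together with the bookkeeping showing that the $\log(-1)$-coefficient survives as $-ic_0$ — precisely the step where positivity of the $\alpha_j$ enters, through the real-versus-imaginary dichotomy. An alternative packaging would apply Corollary \ref{baker-cor} directly after arranging $\mathbb{Q}$-linearly independent coefficients, but that route carries out essentially the same linear-algebra reduction, so I would prefer to argue straight from Theorem \ref{baker-llog}.
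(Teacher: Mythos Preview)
Your argument is correct and follows exactly the route the paper indicates: the paper does not prove this corollary in-text but refers to \cite{ram-saradha}, with the accompanying remark that one writes $i\pi$ as $\log(-1)$ (or $2\log i$) and then applies Baker's theorem, which is precisely what you do. Your bookkeeping---extracting a $\mathbb{Q}$-basis containing $\log(-1)$ and using positivity of the $\alpha_j$ to see that the $\log(-1)$-coefficient is untouched---is a clean way to make that sketch rigorous.
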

\begin{remark*}
The proof for the above corollary given in \cite{ram-saradha} also goes through when the branch of logarithm chosen is the principal branch. In that case, we replace $i \pi$ by $2 \log i$ and proceed as in \cite{ram-saradha}.
\end{remark*}

In an unpublished paper, Milnor conjectured the complete set of multiplicative relations among cyclotomic numbers in the set $\{ 1 - \zeta_q^x : 1 \leq x \leq q-1 \}$ for a fixed positive integer $q$. Here $\zeta_q$ denotes a primitive $q^{\text{th}}$ root of unity. This conjecture was proved by Hyman Bass \cite{bass} in 1965. In 1972, Veikko Ennola \cite{ennola} realized that the conjecture was true upto a factor of $2$ and not in general. He gave a different proof of the conjecture in his paper. Since his formulation of the theorem is easier to apply in our setting, we will state it here. Let $a_x := \log(|1 - \zeta_q^x|)$. The following theorem characterizes all additive relations among the numbers $\{a_x | 1 \leq x \leq q-1 \}$.
\begin{theorem}\label{bass-ennola}
Consider the following two relations: For $1 \leq x \leq \floor*{\frac{(q-1)}{2}}$,
\begin{equation}\label{relation1}
\mathfrak{R}_1 : a_x - a_{q-x} = 0,
\end{equation}
and for any divisor $d$ of $q$ and $1 < d < q$ and $1 \leq c \leq d-1$,
\begin{equation}\label{relation2}
\mathfrak{R}_2: a_{\frac{q}{d}c} - \sum_{j=1}^{\frac{q}{d} - 1} a_{c+dj} = 0.
\end{equation}
Let $R$ be an additive relation among the $a_x$'s over the integers. Then, $2R$ is a $\mathbb{Z}$-linear combination of relations of the form $\mathfrak{R}_1$ and $\mathfrak{R}_2$.
\end{theorem}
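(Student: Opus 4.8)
The plan is to prove the theorem in three stages: first verify that $\mathfrak{R}_1$ and $\mathfrak{R}_2$ are genuine additive relations; then show that over $\mathbb{Q}$ they span \emph{all} such relations; and finally upgrade this rational statement to the integral one, which is where the factor of $2$ intervenes. That $\mathfrak{R}_1$ holds is immediate from $|1-\zeta_q^x|=|1-\overline{\zeta_q^x}|=|1-\zeta_q^{q-x}|$. For $\mathfrak{R}_2$, fix a divisor $d$ of $q$ with $1<d<q$, put $m=q/d$, and use the elementary identity $\prod_{j=0}^{m-1}\bigl(1-\omega\,\zeta_q^{dj}\bigr)=1-\omega^{m}$, valid because $\zeta_q^{d}$ ranges over all $m$-th roots of unity; specialising $\omega=\zeta_q^{c}$ gives $\prod_{j=0}^{m-1}(1-\zeta_q^{c+dj})=1-\zeta_q^{cm}=1-\zeta_d^{c}$, and passing to $\log|\cdot|$ yields $\mathfrak{R}_2$.

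For the second stage I would pass to Dirichlet characters. Using $\log(1-\zeta_q^{x})=-\sum_{n\ge 1}\zeta_q^{xn}/n$ and extracting real parts, one finds that for every character $\chi$ modulo $q$ (with the convention $\chi(x)=0$ when $\gcd(x,q)>1$),
\[
\sum_{x=1}^{q-1}\overline{\chi(x)}\,a_x=\tfrac12\bigl(1+\chi(-1)\bigr)\sum_{x=1}^{q-1}\overline{\chi(x)}\log(1-\zeta_q^{x}),
\]
which vanishes when $\chi$ is odd, equals $-\tau(\overline{\chi})\,L(1,\chi)$ (with $\tau$ the Gauss sum) when $\chi$ is even, primitive and non-principal, and equals $\log|\Phi_q(1)|$ when $\chi$ is principal. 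Since each $a_x$ with $\gcd(x,q)=g$ equals $a^{(q/g)}_{x'}$ for $x=gx'$, running this over all levels $d\mid q$ expresses the character-components of $(a_x)_{1\le x\le q-1}$ in terms of the numbers $\tau(\overline{\chi})L(1,\chi)$ (for even non-principal $\chi$) together with the quantities $\log p$ for primes $p\mid q$. By Baker's theorem (Theorem \ref{baker-llog} and Corollary \ref{baker-cor}), once one restricts to a subfamily of the $1-\zeta_q^{x}$ whose logarithms are $\mathbb{Q}$-linearly independent, these quantities are $\overline{\mathbb{Q}}$-linearly independent, and in particular none of them vanishes (here one also uses the classical fact $L(1,\chi)\ne 0$). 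Hence a rational relation $\sum_x c_x a_x=0$ can involve only the components attached to the \emph{vanishing} part of the spectrum — the odd characters, and, when $q$ is not a prime power, the principal character — and it remains to verify by a dimension count that the subspace cut out by these vanishing conditions coincides with the $\mathbb{Q}$-span of $\{\mathfrak{R}_1,\mathfrak{R}_2\}$.

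The third stage is the real obstacle, and is exactly the point at which Bass's original formulation needed Ennola's correction. The second stage only gives that an arbitrary integer relation $R$ is a \emph{rational} combination of the $\mathfrak{R}_i$; one must still analyse the finite-index inclusion $\langle\mathfrak{R}_1,\mathfrak{R}_2\rangle_{\mathbb{Z}}\subseteq M$, where $M$ is the full lattice of integral relations among the $a_x$, and show that the cokernel is annihilated by $2$. I would attempt this either by an explicit Smith-normal-form analysis of the integer matrix whose rows are the relations $\mathfrak{R}_i$, tracking the places where a factor of $2$ is forced, or by reformulating $M$ through the Galois-module structure of the subgroup of $\mathbb{Q}(\zeta_q)^{\times}$ generated by the $1-\zeta_q^{x}$ modulo $1-\zeta_q$ and the roots of unity, where the obstruction to descending a half-integral relation to an integral one lies in a group of exponent $2$. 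Organising this integral $2$-torsion bookkeeping cleanly — rather than supplying the transcendence input — is what makes the statement delicate.
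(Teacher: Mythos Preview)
The paper does not prove this theorem. Theorem~\ref{bass-ennola} is stated in the Preliminaries as a result quoted from the literature (Bass \cite{bass}, with the integral correction due to Ennola \cite{ennola}); no argument is supplied, and the paper only \emph{applies} the theorem later in Section~4. So there is no ``paper's own proof'' to compare your proposal against.

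That said, a few remarks on your sketch itself. Stage~1 is fine. In Stage~2 the appeal to Baker's theorem is misplaced: Baker upgrades $\mathbb{Q}$-linear independence of logarithms to $\overline{\mathbb{Q}}$-linear independence, but what you need here is the \emph{reverse} direction --- to show that no further $\mathbb{Q}$-relations exist beyond those forced by the odd characters and the principal character. For that, the only analytic input required is the classical non-vanishing $L(1,\chi)\ne 0$ for non-principal $\chi$; transcendence is irrelevant. More seriously, you defer the actual work (``it remains to verify by a dimension count\ldots'') without indicating how the count goes, and the bookkeeping over all divisors $d\mid q$ is exactly where the argument becomes non-trivial. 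Stage~3 you correctly identify as the crux --- Ennola's contribution --- but you offer only two possible strategies (Smith normal form, or Galois-module analysis) without executing either; in particular, the claim that the cokernel has exponent~$2$ is the entire content of the theorem over $\mathbb{Z}$, and simply asserting that ``the obstruction\ldots lies in a group of exponent~$2$'' restates what is to be proved. As written, then, your proposal is a reasonable roadmap but not a proof: both substantive stages are left open.
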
 

\section{\bf Odd functions}
\bigskip

In this section, we reproduce a simple argument of Baker, Birch and Wirsing \cite{bbw} that gives us a necessary and sufficient condition on an odd function $f$ such that $L(1,f) = 0$.
\begin{theorem}\label{odd-function-char}
Let $f$ be an odd algebraic-valued arithmetical function, periodic with period $q$. Then $L(1,f) = 0$ if and only if $\sum_{x=1}^{q-1} x \hat{f}(x) = 0$.
\end{theorem}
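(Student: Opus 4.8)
The plan is to start from the closed form \eqref{linear-form-log}, namely $L(1,f) = -\sum_{x=1}^{q-1} \hat{f}(x)\log(1-\zeta_q^x)$, and to exploit the fact that $f$ is odd. First I would record what oddness of $f$ means on the Fourier side: since $f(-a) = -f(a)$, a direct computation with the definition of $\hat f$ gives $\hat f(q-x) = -\hat f(x)$ for all $x$ (equivalently $\hat f$ is an odd function mod $q$). Next I would split the sum in \eqref{linear-form-log} by pairing the index $x$ with $q-x$. Using $1 - \zeta_q^{q-x} = 1 - \overline{\zeta_q^x} = \overline{1-\zeta_q^x}$, so that $\log(1-\zeta_q^{q-x}) = \overline{\log(1-\zeta_q^x)}$ for the principal branch, the pairing yields
\begin{equation*}
\hat f(x)\log(1-\zeta_q^x) + \hat f(q-x)\log(1-\zeta_q^{q-x}) = \hat f(x)\bigl(\log(1-\zeta_q^x) - \overline{\log(1-\zeta_q^x)}\bigr) = 2i\,\hat f(x)\,\arg(1-\zeta_q^x).
\end{equation*}
Thus the real parts $a_x = \log|1-\zeta_q^x|$ cancel in pairs and only the arguments survive; explicitly $\arg(1-\zeta_q^x) = -\tfrac12(\pi - \tfrac{2\pi x}{q}) = \tfrac{\pi x}{q} - \tfrac{\pi}{2}$ for $1\le x\le q-1$. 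Substituting and simplifying, $L(1,f)$ becomes a rational multiple of $\pi$ times $\sum_x \hat f(x)(\tfrac{x}{q} - \tfrac12)$, and since $\sum_x \hat f(x) = f(0)\cdot$(something) — more carefully, $\hat f$ odd forces $\sum_{x=1}^{q-1}\hat f(x) = 0$ — the constant term drops and we are left with
\begin{equation*}
L(1,f) = \frac{i\pi}{q}\sum_{x=1}^{q-1} x\,\hat f(x)
\end{equation*}
up to an explicit nonzero rational constant (the precise constant I would pin down in the writeup).

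From this identity the equivalence is immediate in one direction: if $\sum_{x=1}^{q-1} x\hat f(x) = 0$ then $L(1,f) = 0$. For the converse I would invoke the transcendence input: the coefficients $\hat f(x)$ are algebraic (Fourier transform of an algebraic-valued function over the cyclotomic field $\mathbb{Q}(\zeta_q)$), so $\sum_x x\hat f(x)$ is algebraic; if it were nonzero, then $L(1,f)$ would be a nonzero algebraic multiple of $i\pi$ (equivalently of $\log(-1)$, or $2\log i$ in the principal-branch normalization), hence transcendental by Baker's theorem — precisely the content of Corollary~\ref{transc-pi} with $n=0$ — and in particular nonzero, contradicting $L(1,f) = 0$. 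Therefore $L(1,f) = 0$ forces $\sum_{x=1}^{q-1} x\hat f(x) = 0$.

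The only real obstacle is bookkeeping: getting the branch of the logarithm and the value of $\arg(1-\zeta_q^x)$ exactly right over the full range $1 \le x \le q-1$, and checking that the constant term genuinely vanishes (this is where oddness of $\hat f$, i.e. $\hat f(q-x) = -\hat f(x)$, is used a second time). None of this is deep, but it must be done carefully so that the final clean identity $L(1,f) = c\,i\pi\sum_{x} x\hat f(x)$ with $c\in\mathbb{Q}^\times$ emerges correctly; once it does, the transcendence step is a one-line appeal to Baker.
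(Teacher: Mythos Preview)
Your approach is essentially the paper's: both separate $\log(1-\zeta_q^x)$ into its real part $\log|1-\zeta_q^x|$ (even in $x$) and its imaginary part $(\tfrac{x}{q}-\tfrac12)\pi$, then use oddness of $\hat f$ to kill the even contribution and the constant term, arriving at the exact identity $L(1,f) = -\tfrac{i\pi}{q}\sum_{x=1}^{q-1} x\hat f(x)$ (the paper does this via the explicit formula \eqref{log-real-and-im} rather than your conjugation pairing, but the content is identical). One remark: once this identity is in hand the equivalence is immediate in \emph{both} directions since $i\pi/q \neq 0$, so your appeal to Baker's theorem (Corollary~\ref{transc-pi}) for the converse is unnecessary, and the paper does not invoke it.
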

\begin{proof}
Let us note that
\begin{equation}\label{mod-of-cycl}
1 - \zeta_q^x = - (\zeta_q^{x/2} - \zeta_q^{-x/2}) \zeta_q^{x/2} = -2 i \bigg( \sin \bigg(\frac{x \pi}{q} \bigg)\bigg)e^{x \pi i/q},
\end{equation}
and so the principal value of the logarithm is
\begin{equation}\label{log-real-and-im}
\log (1 - \zeta_q^x) = \log \bigg(2 \sin \frac{x \pi}{q} \bigg) + \bigg(\frac{x}{q} - \frac{1}{2} \bigg) \pi i
\end{equation}
for $1 \leq x < q$. Substituting \eqref{log-real-and-im} in the expression for $L(1,f)$ as a linear form in logarithms \eqref{linear-form-log}, we get
\begin{align*}
L(1,f) & = - \sum_{x=1}^{q-1}  \hat{f}(x) \bigg[ \log \bigg(2 \sin \frac{x \pi}{q} \bigg) + \bigg(\frac{x}{q} - \frac{1}{2} \bigg) \pi i \bigg] \\
& =  - \sum_{x=1}^{q-1} \hat{f}(x) \log \bigg(2 \sin \frac{x \pi}{q} \bigg) -  \frac{i \pi}{q}\sum_{x=1}^{q-1} x \hat{f}(x)  + \frac{i \pi}{2} \sum_{x=1}^{q-1} \hat{f}(x). \numberthis \label{cond-main} 
\end{align*}
Since $f$ is an odd function, $\hat{f}$ is also an odd function. Hence,
\begin{equation*}
2 \sum_{x=1}^{q-1} \hat{f}(x) = \sum_{x=1}^{q-1} [ \hat{f}(x) + \hat{f} (q-x) ] = 0.
\end{equation*}
Therefore, the last term of \eqref{cond-main} is zero. Now, note that $\sin( \pi - \theta) = \sin( \theta)$. Thus, $\sin(x \pi/q)$ is an even function. Hence, $\log (2 \sin \frac{x \pi}{q} )$ is even which implies that $\hat{f}(x) \log (2 \sin \frac{x \pi}{q} )$ is an odd function.\\
Therefore, the first term of \eqref{cond-main},
\begin{equation*}
\sum_{x=1}^{q-1} \hat{f}(x) \log \bigg( 2 \sin \frac{x \pi}{q} \bigg)= 0. 
\end{equation*}
The result is immediate from here. 
\end{proof}
\begin{remark*}
The condition obtained above is on the Fourier transform of the function and not the function itself. Using the Fourier inversion formula, we can deduce a condition on the function. The condition obtained in Theorem \ref{odd-function-char} can be simplified to
\begin{equation*}
\begin{split}
\sum_{x=1}^{q-1} x \hat{f}(x) = & \frac{1}{q}\sum_{x=1}^{q-1} x \sum_{n=1}^q f(n) \zeta_q^{-nx} \\
& = \frac{1}{q} \sum_{n=1}^q f(n) \sum_{x=1}^{q-1} x \zeta_q^{-nx} \\
& = \frac{1}{q} \sum_{n=1}^{q-1} f(n) \sum_{x=1}^{q-1} x \zeta_q^{-nx},
\end{split}
\end{equation*}
as $f(q) = f(0) = f(-q) = - f(q) = 0$ since $f$ is odd. The innermost sum can be evaluated as follows. Let $T$ be an indeterminate. Observe that 
\begin{equation}\label{eq1}
\sum_{x=0}^{q-1} T^x = \frac{T^q - 1}{T - 1}.
\end{equation}
Differentiating \eqref{eq1} with respect to $T$, we have
\begin{equation*}
\sum_{x=1}^{q-1} x T^{x-1} = \frac{qT^{q-1}}{{T-1}} - \frac{T^q - 1}{{(T-1)}^2}.
\end{equation*}
Multiplying the above equation by $T$ and substituting $T = \zeta_q^{-n}$, we get
\begin{equation*}
\sum_{x=1}^{q-1} x \zeta_q^{-nx} = \frac{q}{\zeta_q^{-n} - 1} = \frac{q \zeta_q^n}{1 - \zeta_q^n}.
\end{equation*}
This observation along with Theorem \ref{odd-function-char} gives: for $f$ odd,
\begin{equation}\label{condition}
L(1,f) = 0 \iff \sum_{n=1}^{q-1} \frac{f(n)}{1 - \zeta_q^n} = 0.
\end{equation}
Let us note that
\begin{equation*}
\frac{1}{1 - \zeta_q^n} = \frac{i}{2} \cot \bigg( \frac{n \pi}{q} \bigg) + \frac{1}{2}
\end{equation*}
and that since $f$ is odd,
\begin{equation*}
2 \sum_{n=1}^{q-1} f(n) = \sum_{n=1}^{q-1} [ f(n) + f(q-n)] = 0.
\end{equation*}
Hence, the condition \eqref{condition} can be translated as: For odd algebraic-valued periodic functions $f$,
\begin{equation*}
L(1,f) = 0 \iff \sum_{n=1}^{q-1} f(n) \cot \bigg( \frac{n \pi}{q} \bigg) = 0.
\end{equation*}
\end{remark*}
We would like to mention that Baker, Birch and Wirsing also obtained a basis for the $\bar{\mathbb{Q}}$-vector space of odd algebraic-valued arithmetical functions $f$, periodic with period $q$ and $L(1,f) = 0$ in their paper \cite{bbw}. Thus, the characterization of odd functions is complete.

\section{\bf Even functions}
\bigskip

In this section, we will use a theorem of Bass that characterizes all the multiplicative relations among cyclotomic numbers modulo torsion (\cite{bass}, \cite{ennola}). We will give a necessary condition for even periodic functions $f$ to satisfy $L(1,f) = 0$.\\\\
We define the following functions that serve as building blocks for even algebraic-valued functions, periodic with period $q$.
Fix a divisor $d$ of $q$. For $c \in \{ 1, 2, \cdots, d-1 \}$, we define $F_{d,c} := F^{(1)}_{d,c} - F^{(2)}_{d,c}$ where the two functions, $F^{(1)}_{d,c}$ and $F^{(2)}_{d,c}$ are arithmetical functions, periodic with period $q$. They are defined as follows:
\begin{equation*}
F^{(1)}_{d,c} (x) = 
\begin{cases}
1/2 & \text{if } x \equiv c \bmod q,\\
0 & \text{otherwise}.
\end{cases}
\end{equation*}

\begin{equation*}
F^{(2)}_{d,c} (x) =
\begin{cases}
1/2 & \text{if } x \equiv (\frac {q} {d}) c \bmod q,\\
0 & \text{otherwise}.
\end{cases}
\end{equation*}

We will prove the following result in this section:
\begin{theorem}\label{even-function-char}
Let $f$ be an algebraic valued, even function which is periodic with period $q$. If $L(1,f) = 0$, then $f$ is an algebraic linear combination of the functions $\{\widehat{F_{d,c}} | \textrm{ for any divisor} \ d \ \textrm{of} \  q, \hspace{1mm} 1 < d < q, \hspace{1mm} 1 \leq c \leq d - 1 \}$.
\end{theorem}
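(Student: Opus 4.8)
The plan is to turn the hypothesis $L(1,f)=0$ into a vanishing $\bar{\mathbb{Q}}$-linear combination of logarithms, apply Baker's theorem to force it to be a $\mathbb{Q}$-linear consequence of \emph{integral} relations among the real numbers $a_x=\log|1-\zeta_q^x|$, invoke the Bass--Ennola classification (Theorem \ref{bass-ennola}) of all such relations, and finally translate the resulting constraint on $\hat f$ back to $f$ through Fourier inversion. First I would reduce to a real relation. Since $f$ is even, $\hat f$ is even, so reindexing gives $2\sum_{x=1}^{q-1}x\hat f(x)=q\sum_{x=1}^{q-1}\hat f(x)$, and hence the two purely imaginary terms in \eqref{cond-main} cancel, exactly as the last term did in the proof of Theorem \ref{odd-function-char}. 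Using $2\sin(x\pi/q)=|1-\zeta_q^x|$ for $1\le x\le q-1$, what is left is the single additive relation
\[
L(1,f)=0 \iff \sum_{x=1}^{q-1}\hat f(x)\,a_x=0,\qquad a_x:=\log|1-\zeta_q^x|,
\]
with coefficients $\hat f(x)\in\bar{\mathbb{Q}}$ (as $f$ is algebraic-valued).

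Next, the $a_x$ are logarithms of the positive algebraic numbers $|1-\zeta_q^x|$. Let $W\subseteq\mathbb{Q}^{\,q-1}$ be the space of $\mathbb{Q}$-linear relations among $a_1,\dots,a_{q-1}$; choose from among the \emph{nonzero} $a_x$ a $\mathbb{Q}$-basis $a_{x_1},\dots,a_{x_r}$ of their $\mathbb{Q}$-span and write $a_x=\sum_k c_{xk}a_{x_k}$ with $c_{xk}\in\mathbb{Q}$. By Theorem \ref{baker-llog} the numbers $a_{x_1},\dots,a_{x_r}$ are $\bar{\mathbb{Q}}$-linearly independent, so $0=\sum_x\hat f(x)a_x=\sum_k\bigl(\sum_x c_{xk}\hat f(x)\bigr)a_{x_k}$ forces $\sum_x c_{xk}\hat f(x)=0$ for every $k$; equivalently $(\hat f(x))_{x=1}^{q-1}\in W\otimes_{\mathbb{Q}}\bar{\mathbb{Q}}$. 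Thus $(\hat f(x))_x$ is a $\bar{\mathbb{Q}}$-linear combination of vectors in $W$, and by Theorem \ref{bass-ennola} the space $W$ is spanned over $\mathbb{Q}$ by the relation vectors coming from $\mathfrak{R}_1$ and $\mathfrak{R}_2$.

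Since the $\mathfrak{R}_1$-vectors $e_x-e_{q-x}$ are anti-invariant under $\sigma\colon x\mapsto q-x$ while $\hat f$ is $\sigma$-invariant, applying the averaging projector $\tfrac12(1+\sigma)$ to the combination just obtained annihilates every $\mathfrak{R}_1$-contribution, leaving $(\hat f(x))_x$ as a $\bar{\mathbb{Q}}$-linear combination of the $\sigma$-symmetrizations of the $\mathfrak{R}_2$-vectors, indexed by divisors $d\mid q$ with $1<d<q$ and by $c\in\{1,\dots,d-1\}$. The last step is to Fourier-invert: using the inversion formula \eqref{fourier-inversion}, together with the fact that $\{c+dj:0\le j<q/d\}$ is the residue class of $c$ modulo $d$, the evaluation $\sum_{j=0}^{q/d-1}\zeta_q^{djn}=(q/d)\mathbf 1_{(q/d)\mid n}$, and $a_x=a_{q-x}$, one checks that these symmetrized $\mathfrak{R}_2$-vectors correspond to rational linear combinations of the functions $\widehat{F_{d,c}}$. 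This exhibits $f$ as a $\bar{\mathbb{Q}}$-linear combination of the $\widehat{F_{d,c}}$, as required.

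The conceptual input is the reduction to a real relation together with the use of Baker's theorem; the delicate part is this last matching of $\mathfrak{R}_2$ against the explicit definition of $F_{d,c}$, where one must handle with care the degenerate indices — those with $c=q-c$, with $(q/d)c\equiv-(q/d)c\pmod q$, or with $a_x=0$ — each of which changes the support, and hence the normalization, of the indicator functions involved by a factor of $2$. One must also be careful that Baker's theorem is applied only to descend from a $\bar{\mathbb{Q}}$-relation to a $\mathbb{Q}$-combination of integral relations: the $a_x$ themselves are not $\bar{\mathbb{Q}}$-linearly independent, so the second step has to be organized around a $\mathbb{Q}$-basis of their span.
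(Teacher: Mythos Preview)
Your proposal is correct and follows the same overall architecture as the paper's proof: reduce $L(1,f)=0$ to a $\bar{\mathbb{Q}}$-linear relation among the real numbers $a_x=\log|1-\zeta_q^x|$, use Baker to descend to a $\mathbb{Q}$-linear combination of integral relations, invoke Bass--Ennola (Theorem~\ref{bass-ennola}), kill the $\mathfrak{R}_1$-part using evenness of $\hat f$, and Fourier-invert to land in the span of the $\widehat{F_{d,c}}$.

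The one genuine organizational difference lies in \emph{how} Baker is applied. The paper fixes a $\mathbb{Q}$-basis $\{\omega_1,\dots,\omega_r\}$ of the coefficient field $M=\mathbb{Q}(f(1),\dots,f(q),\zeta_q)$, writes $N\hat f(x)=\sum_j c_j(x)\omega_j$ with $c_j(x)\in\mathbb{Z}$, and uses Corollary~\ref{baker-cor} with $\beta_j=\omega_j$ to force each $R_j=\sum_x c_j(x)a_x$ to vanish separately; Bass--Ennola is then applied to each integral relation $2R_j=0$. You instead take the dual route: fix a $\mathbb{Q}$-basis among the $a_x$ themselves, use Theorem~\ref{baker-llog} to upgrade their $\mathbb{Q}$-independence to $\bar{\mathbb{Q}}$-independence, and conclude directly that the single vector $(\hat f(x))_x$ lies in $W\otimes_{\mathbb{Q}}\bar{\mathbb{Q}}$, where $W$ is the rational relation space. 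Your phrasing via the relation space $W$ and the averaging projector $\tfrac12(1+\sigma)$ is clean and makes the elimination of $\mathfrak{R}_1$ transparent; the paper's version has the advantage that the integer coefficients $c_j(x)$ feed directly into Bass--Ennola as literally stated (an integral relation), without the extra remark that $W$ is the $\mathbb{Q}$-span of the integral relations. Note also that $\sigma$ sends the $\mathfrak{R}_2$-vector for $(d,c)$ to that for $(d,d-c)$, so your symmetrized $\mathfrak{R}_2$-vectors remain in the $\mathbb{Q}$-span of the $F_{d,c}$; together with the evenness of $f$ (so that the sign flip in Fourier inversion is harmless) this completes your final identification.
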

Here, $\widehat{F_{d,c}}$ denotes the Fourier transform of $F_{d,c}$ which can be computed as follows:
For $1 \leq y \leq q$,
\begin{equation*}
\begin{split}
\widehat{F_{d,c}^{(1)}}(y) & =  \frac{1}{q} \sum_{a=1}^q F_{d,c}^{(1)}(a) \zeta_q^{-ay} \\
& = \frac{1}{2q} \sum_{j=0}^{\frac{q}{d} - 1} \zeta_q^{-(c + dj)y} \\
& = \frac{\zeta_q^{-yc}}{2q} \sum_{j=0}^{\frac{q}{d} -1} \zeta_q^{-djy} \\
& = \frac{\zeta_q^{-yc}}{2q} \sum_{j=0}^{\frac{q}{d} -1} \zeta_{\frac{q}{d}}^{-jy}.
\end{split}
\end{equation*}
Note that the sum
\begin{equation*}
\sum_{j=0}^{\frac{q}{d} -1} \zeta_{\frac{q}{d}}^{-jy} =
\begin{cases}
\frac{q}{d} & \text{if } y \equiv 0 \bmod \frac{q}{d}, \\
0 & \text{otherwise}.
\end{cases}
\end{equation*}
Similarly, for $1 \leq y \leq q$,
\begin{equation*}
\begin{split}
\widehat{F_{d,c}^{(2)}}(y) & = \frac{1}{q} \sum_{a=1}^q F_{d,c}^{(2)} (a) \zeta_q^{-ay} \\
& = \frac{1}{2q} \zeta_q^{-\frac{q}{d} c y} \\
& = \frac{1}{2q} \zeta_d^{-cy}.
\end{split}
\end{equation*}

More precisely,
\begin{equation*}
\widehat{F_{d,c}} (y) =
\begin{cases}
\frac{\zeta_q^{-cy}}{2d} - \frac{\zeta_d^{-cy}}{2q} & \text{if } y \equiv 0 \bmod \frac{q}{d}, \\
- \frac{1}{2q} \zeta_d^{-cy} & \text{otherwise}.
\end{cases}
\end{equation*}
Thus, we note that $\widehat{F_{d,c}}$ 's are in fact, simple functions.
\begin{proof}
Let $f$ be an even algebraic-valued, periodic function with period $q$, not identically zero. Let $M := \mathbb{Q}(f(1), \cdots, f(q), \zeta_q)$. Let $\left \{ \omega_1, \omega_2, \cdots, \omega_r \right \}$  be a basis for $M$ over $\mathbb{Q}$. There exists  $d_j(x)$  $\in$  $\mathbb{Q}$ such that,
\begin{equation*}
\hat{f}(x) = \sum_{j=1}^r d_j(x) \omega_j. 
\end{equation*}
We can choose an integer $N$ such that $\forall$ $1 \leq x \leq q$, $\forall$ $1 \leq j \leq r$, $c_j(x) := Nd_j(x) \in \mathbb{Z}.$
Hence, $N\hat{f}(x) = \sum_{j=1}^r c_j(x) \omega_j$. Thus,
\begin{equation*}
\begin{split}
NL(1,f) & = - \sum_{x=1}^{q-1} \sum_{j=1}^r \omega_j c_j(x) \log { (1 - \zeta_q^x) } \\
& = - \sum_{j=1}^r \omega_j \sum_{x=1}^{q-1} c_j(x) \log { (1 - \zeta_q^x) }
\end{split}
\end{equation*}
Let
\begin{equation*}
R_j := \sum_{x=1}^{q-1} c_j(x) \log { (1 - \zeta_q^x) }.
\end{equation*}
Therefore,
\begin{equation*}
-NL(1,f) = \sum_{j=1}^r \omega_j R_j.
\end{equation*}
As $\hat{f}$ is even and $\omega_1, \cdots, \omega_r$ is a basis, 
\begin{equation*}
c_j(x) = c_j(q-x).
\end{equation*}
Therefore,
\begin{equation*}
R_j = \sum_{x=1}^{\floor*{\frac{(q-1)}{2}}} c_j(x) [ \log (1 - \zeta_q^x) + \log (1 - \zeta_q^{-x}) ].
\end{equation*}
Note that $\log$ denotes the principal branch of logarithm. Thus, $\arg (1 - \zeta_q^x) = - \arg (1 - \zeta_q^{-x})$. For $ 1 \leq x \leq q-1$, define $a_x := \log (|1 - \zeta_q^x|)$. Thus,
\begin{equation*}
R_j = \sum_{x=1}^{\floor*{\frac{(q-1)}{2}}} 2 c_j(x) a_x.
\end{equation*}
Since $c_j(x) = c_j(q-x)$, $R_j$ can be written as
\begin{equation*}
R_j = \sum_{x=1}^{q-1} c_j(x) a_x = \log \bigg( \prod_{x=1}^{q-1} {(|1 - \zeta_q^x|)}^{c_j(x)} \bigg).
\end{equation*}
Let
\begin{equation*}
\alpha_j := \prod_{x=1}^{q-1} {(|1 - \zeta_q^x|)}^{c_j(x)}.
\end{equation*}
Therefore, \\
\begin{equation}\label{L(1,f)-linear-log}
(-N) L(1,f) = \sum_{j=1}^r \omega_j \log \alpha_j.
\end{equation}
Let us note that $\alpha_j$ is non-zero, algebraic. Thus, by Corollary \ref{baker-cor}, if $\alpha_j \neq 1$ for some $1 \leq j \leq r$, then $L(1,f)$ will be transcendental and hence non-zero. But $L(1,f) = 0$ by assumption. Hence, $\alpha_j = 1$ and in turn $R_j = 0$ ,  $\forall$ $1 \leq j \leq r$. Thus, we are led to consider relations among logarithms of the cyclotomic numbers, $1 - \zeta_q^x$. Let $\mathscr{R}$ denote the relation $2R_j = 0$. By Theorem \ref{bass-ennola},  $\mathscr{R}$ belongs to the $\mathbb{Z}$-module generated by relations of the form \eqref{relation1} and \eqref{relation2}. Since $c_j(x) = c_j(q-x)$, $\mathscr{R}$ belongs to the $\mathbb{Z}$-module generated by \eqref{relation2}. Indeed all relations $R:= \sum_{x=1}^{q-1} C_x a_x = 0$ in the $\mathbb{Z}$-module generated by \eqref{relation1} satisfy $C_x = - C_{q-x}$, which along with the fact that $C_j(x) = C_j(q-x)$ (which stem from the evenness of $f$) imply that $c_j(x)=0$ $\forall$ $1 \leq x \leq q-1$. Thus, $\mathscr{R}$ is a $\mathbb{Z}$-linear combination of \eqref{relation2}. Observe that the functions $F_{d,c}$ are precisely those that represent the relation \eqref{relation2}. This implies that the functions $c_j$ are integer linear combinations of $F_{d,c}$, say $c_j(x) = \sum_{d|q, 1 < d < q} \sum_{c=1}^{d-1} m_{j,d,c} F_{d,c}(x)$, where $m_{j,d,c} \in \mathbb{Z}$. Then,
\begin{equation*}
\begin{split}
N \hat{f}(x)& = \sum_{j=1}^{r} c_j(x) \omega_j.\\
 & = \sum_{j=1}^{r} \omega_j \sum_{d|q \atop 1 < d < q} \sum_{c=1}^{d-1} m_{j,d,c} F_{d,c}(x).\\
 & = \sum_{d|q \atop 1 < d < q} \sum_{c=1}^{d-1} F_{d,c}(x) \sum_{j=1}^{r} m_{j,d,c} \omega_j.\\
\end{split}
\end{equation*}
Taking the Fourier transform of both sides and using the Fourier inversion formula, the result follows.
\end{proof}
\begin{remark*}
On examining the above proof, we observe that we also obtain a sufficiency condition for $L(1,f)=0$. More precisely, suppose $f$ is a given algebraic linear combination of the functions
\begin{equation*}
\{\widehat{F_{d,c}} \hspace{1mm} |\  \textrm{any divisor} \ d \ \textrm{of} \  q, 1 < d < q, 1 \leq c \leq d - 1 \} ,
\end{equation*}
such that $f$ is an even function, then $L(1,f) = 0$. But it does not seem easy to determine whether a given function is an algebraic linear combination of these particular functions.
\end{remark*}

As a corollary, we have:
\begin{corollary}
When $q$ is prime, there are no algebraic-valued even functions $f$ such that $L(1,f)=0$.
\end{corollary}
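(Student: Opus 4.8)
The plan is to deduce the corollary immediately from Theorem \ref{even-function-char}. The one thing to observe is purely arithmetic: if $q$ is prime, its only positive divisors are $1$ and $q$, so there is \emph{no} divisor $d$ with $1 < d < q$. Hence the index set over which the building blocks $\widehat{F_{d,c}}$ in Theorem \ref{even-function-char} range is empty. First I would note that an algebraic linear combination of an empty family of functions is, by the usual empty-sum convention, the identically zero function. Thus Theorem \ref{even-function-char} says precisely that an even, algebraic-valued $f$ of prime period $q$ with $L(1,f)=0$ must satisfy $f\equiv 0$; equivalently, there is no nonzero such $f$.

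Should one wish to make the argument self-contained rather than invoking Theorem \ref{even-function-char} verbatim, I would re-run the first part of its proof: write $N\hat f(x)=\sum_{j=1}^r c_j(x)\omega_j$ with the $\omega_j$ a $\mathbb{Q}$-basis of $M=\mathbb{Q}(f(1),\dots,f(q),\zeta_q)$ and $c_j(x)\in\mathbb{Z}$, so that $-NL(1,f)=\sum_{j=1}^r \omega_j\log\alpha_j$ with $\alpha_j=\prod_{x=1}^{q-1}|1-\zeta_q^x|^{c_j(x)}$ algebraic. By Corollary \ref{baker-cor}, $L(1,f)=0$ forces every $\alpha_j=1$, i.e.\ $R_j:=\sum_{x=1}^{q-1}c_j(x)a_x=0$. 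Now apply Theorem \ref{bass-ennola}: $2R_j$ lies in the $\mathbb{Z}$-span of the relations $\mathfrak{R}_1$ and $\mathfrak{R}_2$, and for prime $q$ there are no relations of type $\mathfrak{R}_2$ at all, so $2R_j$ sits in the $\mathbb{Z}$-span of the relations $\mathfrak{R}_1:a_x-a_{q-x}=0$. Every coefficient vector $(C_x)$ arising there satisfies $C_x=-C_{q-x}$, while the evenness of $f$ gives $c_j(x)=c_j(q-x)$; combining the two yields $c_j(x)=0$ for all $x$ and $j$, hence $\hat f\equiv 0$ and so $f\equiv 0$.

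I do not expect a genuine obstacle here: the whole weight of the even case, and ultimately of Bass's theorem, has already been absorbed into Theorem \ref{even-function-char}, and the prime case is just the remark that relation $\mathfrak{R}_2$ is vacuous when $q$ has no proper nontrivial divisor. The only points deserving a line of care are the empty-sum convention mentioned above (so that the corollary is to be read as "no \emph{nonzero} even $f$") and, if one writes out the self-contained version, the small linear-algebra step that the span of the $\mathfrak{R}_1$'s contains only antisymmetric relations — which is exactly the cancellation already exploited in the proof of Theorem \ref{even-function-char}.
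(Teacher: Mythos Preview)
Your proposal is correct and follows exactly the paper's own route: observe that for prime $q$ the set $\{\widehat{F_{d,c}}:1<d<q,\ 1\le c\le d-1\}$ is empty, and conclude directly from Theorem~\ref{even-function-char}. Your remark that the statement must be read as ``no \emph{nonzero} such $f$'' and your optional self-contained argument via Theorem~\ref{bass-ennola} are sound but go beyond what the paper records.
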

\begin{proof}
If $q$ is prime, the set $\{\widehat{F_{d,c}} |\  \textrm{any divisor} \ d \ \textrm{of} \  q, 1 < d < q, 1 \leq c \leq d - 1 \}$ is an empty set. The conclusion follows from Theorem \ref{even-function-char}.
\end{proof}

\section{\bf Conclusion}
\bigskip

The previous two sections give a characterization of algebraic-valued periodic arithmetic functions, odd and even respectively whose $L$-series vanish at $s=1$. In this section, we mention a theorem of Ram Murty and Tapas Chatterjee \cite{ram-tapas} which nicely ties the non-vanishing of the odd and the even functions to give us the characterization required. The proof in their paper is incorrect as care was not taken regarding the branch of logarithm. The proof given below follows a different argument. As seen earlier, given any function $f$, we write it as $f = f_o + f_e$, where $f_o$ is the odd part of $f$ and $f_e$ is the even part of $f$.
\begin{theorem}
If $f$ is an algebraic-valued arithmetical function, periodic with period $q$, then
$L(1,f) = 0$ $\iff$ $L(1,f_o) = 0$ and $L(1,f_e) = 0$.
\end{theorem}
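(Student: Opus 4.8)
The plan is to prove the two implications separately, the work being entirely in the forward direction. For $(\Leftarrow)$ note that $f=f_o+f_e$ gives $L(s,f)=L(s,f_o)+L(s,f_e)$ for $\Re(s)>1$, hence $L(1,f)=L(1,f_o)+L(1,f_e)$ after analytic continuation; so if both summands vanish, so does $L(1,f)$. (First one checks both pieces make sense: $\sum_{a=1}^q f_o(a)=0$ automatically by reindexing $a\mapsto -a$, and $\sum_{a=1}^q f_e(a)=\sum_{a=1}^q f(a)=0$ under our standing hypothesis, so $\widehat{f_o}(q)=\widehat{f_e}(q)=0$.)

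For $(\Rightarrow)$ the idea is to separate the ``transcendental content'' of the odd and even parts via \eqref{log-real-and-im} and then invoke Corollary \ref{transc-pi}. Since the Fourier transform commutes with taking odd and even parts, $\widehat{f_o}$ is odd and $\widehat{f_e}$ is even. Substituting \eqref{log-real-and-im} into \eqref{linear-form-log} exactly as in the proof of Theorem \ref{odd-function-char}, i.e. using \eqref{cond-main}, and using that $\log(2\sin(x\pi/q))$ is invariant under $x\mapsto q-x$ while $x/q-1/2$ changes sign, the parity cancellations give
\begin{equation*}
L(1,f_o) = -\frac{i\pi}{q}\sum_{x=1}^{q-1} x\,\widehat{f_o}(x), \qquad
L(1,f_e) = -\sum_{x=1}^{q-1} \widehat{f_e}(x)\,\log\!\Big(2\sin\tfrac{x\pi}{q}\Big),
\end{equation*}
the first being $\pi$ times an algebraic number, the second a linear form with algebraic coefficients in the logarithms of the positive real algebraic numbers $2\sin(x\pi/q)=|1-\zeta_q^x|$, $1\le x\le q-1$.

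Adding these and using $L(1,f)=0$ yields
\begin{equation*}
0 = c_0\,\pi + \sum_{x=1}^{q-1}\bigl(-\widehat{f_e}(x)\bigr)\log\!\Big(2\sin\tfrac{x\pi}{q}\Big),
\qquad c_0 := -\frac{i}{q}\sum_{x=1}^{q-1} x\,\widehat{f_o}(x)\in\bar{\mathbb{Q}},
\end{equation*}
so that $L(1,f_o)=c_0\pi$. Now Corollary \ref{transc-pi} applies verbatim (the $2\sin(x\pi/q)$ are positive algebraic, the coefficients $-\widehat{f_e}(x)$ algebraic): if $c_0\neq 0$ the right-hand side would be transcendental, contradicting $L(1,f)=0$. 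Hence $c_0=0$, which is precisely the statement $L(1,f_o)=c_0\pi=0$; and then $L(1,f_e)=L(1,f)-L(1,f_o)=0$ as well.

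The step needing care — and, per the text, the one mishandled in \cite{ram-tapas} — is the bookkeeping with the principal branch: one must use the exact identity \eqref{log-real-and-im}, valid for $1\le x<q$, rather than $\log(1-\zeta_q^x)=\log|1-\zeta_q^x|+i\arg(\cdot)$ with a loosely specified range for the argument, and one must appeal to the principal-branch version of Corollary \ref{transc-pi} from the remark (replacing $i\pi$ by $2\log i$ if one prefers). The remaining points — the parity cancellations above, and that $c_0$, assembled from the Fourier coefficients $\widehat{f_o}(x)$ of an algebraic-valued function, is algebraic — are routine.
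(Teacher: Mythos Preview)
Your proof is correct and follows essentially the same route as the paper's: both compute $L(1,f_o)$ as $\pi$ times an algebraic number and $L(1,f_e)$ as an algebraic linear form in the real logarithms $\log(2\sin(x\pi/q))$, then invoke Corollary~\ref{transc-pi} to force the $\pi$-coefficient to vanish. Your version is in fact slightly more careful, since you explicitly verify that $L(1,f_o)$ and $L(1,f_e)$ converge and you keep the full sum $\sum_{x=1}^{q-1}$ in the expression for $L(1,f_e)$, which avoids a cosmetic issue at $x=q/2$ when $q$ is even.
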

\begin{proof}
Let us note that
\begin{equation*}
\sum_{n=1}^{\infty} \frac{f(n)}{n} = \sum_{n=1}^{\infty} \frac{f_e(n)}{n} + \sum_{n=1}^{\infty} \frac{f_o(n)}{n}.
\end{equation*}
Thus, it is clear that if $L(1,f_o)$ and $L(1,f_e)$ are both zero, then so is $L(1,f)$. 

Suppose now that $L(1,f) = 0$. The arguments in the second section imply
\begin{equation}\label{L(1,f-odd)}
L(1,f_o) = \frac{-i \pi}{q} \sum_{x=1}^{q-1} x \hat{f_o} (x).
\end{equation} 
We claim that, 
\begin{equation}\label{L(1,f-even)}
L(1,f_e) = -2 \sum_{x=1}^{\floor*{\frac{(q-1)}{2}}} \hat{f_e}(x) \log \bigg( 2  \sin \frac{x \pi}{q}  \bigg).
\end{equation}
Indeed, by \eqref{linear-form-log}, we know that
\begin{equation*}
\sum_{n=1}^{\infty} \frac{f_e(n)}{n} = - \sum_{x=1}^{q-1} \hat{f_e}(x) \log ( 1 - \zeta_q^x).
\end{equation*}
Since $f_e$ is even, so is $\hat{f_e}$, i.e, $\hat{f_e}(q-x) = \hat{f_e} (x)$. Thus,
\begin{equation*}
L(1,f_e) = -2 \sum_{x=1}^{\floor*{\frac{(q-1)}{2}}} \hat{f_e}(x) \log  | 1 - \zeta_q^x|.
\end{equation*}
By \eqref{mod-of-cycl}, 
\begin{equation*}
| 1 - \zeta_q^x | = \bigg| 2 \sin \bigg( \frac{x \pi}{q} \bigg) \bigg|.
\end{equation*}
Hence,
\begin{equation*}
L(1,f_e) = -2 \sum_{x=1}^{\floor*{\frac{(q-1)}{2}}} \hat{f_e}(x) \log \bigg| 2 \sin \bigg( \frac{x \pi}{q} \bigg) \bigg| .
\end{equation*}
Note that, for $1 \leq x \leq \floor*{\frac{(q-1)}{2}}$, $\sin( x \pi/q) > 0$. 

Thus \eqref{L(1,f-odd)} and \eqref{L(1,f-even)} imply
\begin{equation}\label{final-log}
L(1,f) = \frac{-i}{q} \bigg( \sum_{x=1}^{q-1} x \hat{f_o} (x) \bigg) \pi -2 \sum_{x=1}^{\floor*{\frac{(q-1)}{2}}} \hat{f_e}(x) \log \bigg( 2  \sin \frac{x \pi}{q}  \bigg).
\end{equation}

Suppose $L(1,f_o) \neq 0$. By Corollary \ref{transc-pi}, $L(1,f) \neq 0$, which contradicts our assumption. Thus, $L(1,f_o) = 0$ and hence, $L(1,f_e) = 0$. 
\end{proof}
This completes the characterization of algebraic-valued, periodic arithmetical functions with $L(1,f) = 0$.

\end{document}